\newcommand{\G}{{\bf G}} 
\newcommand{\e}{{\bf e}} 
\newcommand{\X}{{\bf X}} 
\theoremstyle{plain}
\newtheorem{theorem}{Theorem}
\newtheorem{lemma}[theorem]{Lemma}
\newtheorem{prop}[theorem]{Proposition}
\titleformat*{\section}{\normalsize\bf}
\titlespacing{\section}{0pt}{18pt}{6pt}
\titleformat*{\subsection}{\normalsize\it}
\titlespacing{\subsection}{0pt}{12pt}{6pt}
\providecommand{\keywords}[1]{\hspace{0.25in}\textit{Keywords}\\
\vspace{-20pt}
\begin{center}
\begin{minipage}{5.84in}
#1
\end{minipage}
\end{center}}
\title{\fontsize{14}{16.8}
\textbf{DETECTING SYMMETRY IN DESIGNING HEAT EXCHANGER NETWORKS\footnote{In Proceedings of the International Conference of Foundations of Computer-Aided Process Operations - FOCAPO/CPC 2017, Tuscon, AZ, Jan 2017. 
Edited by C. Maravelias, E. Ydstie, L. Megan and B. W. Bequette.
}}
}
\author{Georgia Kouyialis}
\author{Ruth Misener\thanks{\textit{r.misener@imperial.ac.uk; Tel: +44 (0) 20759 48315}}}
\affil{Department of Computing, Imperial College London, South Kensington Campus, SW7 2AZ, United Kingdom}
\date{}
\begin{document}

\setlength{\abovedisplayskip}{3pt}
\setlength{\belowdisplayskip}{3pt}

\pagenumbering{gobble}
\twocolumn[
\begin{@twocolumnfalse}
\maketitle
\begin{abstract}
\vspace*{-10pt}
\noindent
Symmetry in mathematical optimisation is of broad and current interest. In problem classes such as mixed-integer linear programming (MILP), equivalent solutions created by symmetric variables and constraints may combinatorially increase the search space. Identifying problem symmetries is an important step towards expediting tree-based algorithms such as branch-and-cut because computationally classifying equivalence allows state-of-the-art solver software to omit symmetric solutions. But symmetry has not been characterised in several critically important process systems engineering applications such as heat exchanger network synthesis; neither do current MILP solvers detect or use symmetries for these energy efficiency problems. This paper uses group theory to study the MILP transshipment model of heat exchanger network synthesis and identifies several types of symmetry arising in the problem. 
We also use parameters in the optimisation problem, e.g.\ temperature and heat capacities of each stream, to classify special cases with many equivalent optimal solutions.
Computational results from an online test case corroborate the proofs. 
\end{abstract}
\keywords{Heat exchanger network, mixed-integer optimisation, transshipment model, symmetry, symmetry groups}\vspace{12pt}
\end{@twocolumnfalse}
]
\saythanks
\linespread{1}
\section*{Introduction}

\noindent
Heat recovery is a major component of industrial processes: a quarter of the 2012 European Union energy consumption came from industry and industry uses 73\% of this energy on heating and cooling \citep{ec-heating-and-cooling:2016}. Heat exchangers reuse excess process heat to save cost and improve energy efficiency by reducing utility usage. In their review articles, \cite{furman:2002} and \cite{escobar-et-trierweiler:2013} report two main synthesis approaches: pinch- and optimisation-based methodologies. Optimisation methods automatically generate the best design taking into consideration both the investment and the operation cost \citep{grossmann:1990}. But there are difficulties when we try to model and solve these problems. One source of problem complexity is the combinatorial explosion in the possible number of stream matches enhancing energy recovery \citep{floudas}. Each possible match between two streams introduces a binary decision variable, so the number of binary variables may grow quadratically with the number of streams. Solving the HENS \emph{simultaneously}, i.e.\ generating the optimal network without decomposition \citep{furman:2002}, requires a mixed-integer nonlinear programming (MINLP) formulation to account for stream mixing and the nonlinear nature of heat exchange \citep{yee:1990, Ciric:1991, papalexandri-pistikopoulos:1994}. These nonlinear terms need strong relaxation methods to approach a global optimum \citep{mistry:2016}. 

An alternative approach is the sequential formulation where the problem is decomposed into three tasks: (i) minimum utility cost, (ii) minimum number of matches, and (iii) minimum investment cost. This method optimises the following mathematical models in series: (i) the linear programming (LP) transshipment model \citep{papoulias:1983}, (ii) the mixed-integer linear programming (MILP) transshipment model \citep{papoulias:1983}, and (iii) the nonlinear programming (NLP) model based on a network superstructure solving \citep{floudas:1986}. The sequential method is less computationally difficult than the simultaneous method, but the sequential method cannot guarantee global optimality of the original problem. We choose to study symmetries in the sequential method because each subproblem nicely isolates computational difficulties associated with solving the full simultaneous model; these studies will give us a new handle on approaching simultaneous synthesis.

Figure \ref{figure:Hmatch} is based on the \cite{floudas} transshipment model. Analogously to transferring a product from source to destination via intermediate intervals, the transshipment model transfers the heat from hot streams and utilities to the cold streams and utilities via temperature intervals. The temperature change is caused by matching the hot and cold streams and utilities at each interval, so, for
two sets of 3 hot streams and 3 cold streams, there are $3 \cdot 3 = 9$ binary variables and in the worst case the MILP needs $2^9 = 512$ nodes.

\noindent
\begin{minipage}{\linewidth}
\centering
\includegraphics[width=.95 \linewidth]{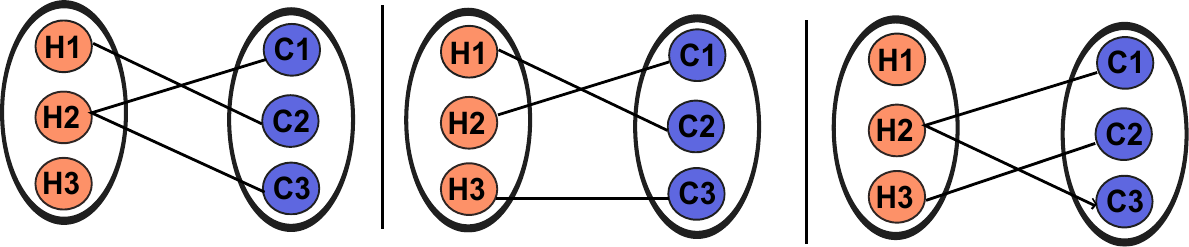}
\captionof{figure}[]{Possible configurations of hot/cold stream pairs.}\label{figure:Hmatch}
\end{minipage}

When we only consider hot-to-cold matches in each transshipment model interval, $n$ hot streams and $m$ cold streams may generate $n \cdot m$ possible pairs. There may be many MILP transshipment model solutions with equivalent objective value \citep{gundersen1990}; we posit that this is due to symmetry.
The significant role of symmetry in optimisation models is described and defined by \cite{margot:2010} and \cite{liberti:2012}. When solving the problem using a tree search strategy such as branch-and-cut, symmetric solutions are unnecessary duplications that need not be investigated. Symmetries cause exponentially large trees with long times to termination, so exploiting symmetry, e.g.\ via advanced branching strategies, may offer an
important advantage for branch-and-cut \citep{ostrowski:2009, costa-etal:2013}. 
This manuscript presents the MILP transshipment model of \textsl{HENS}, analyses the problem structure, and proves the existence of symmetry. Finally, the proofs are computationally demonstrated in a test case.  

\vspace{-12pt}
\section*{Heat Exchanger Network Synthesis}

The description, configuration and analysis of HENS follow \cite{floudas}. This paper addresses MILP transshipment models consisting of a set of hot process streams $HP$ to be cooled and a set of cold process streams $CP$ to be heated; each stream has an initial and target temperature and a heat capacity. There are also hot utilities $HU$ and cold utilities $CU$ with associated temperatures. The symbols representing the mathematical formulation are shown in Table \ref{table:hen}. 

State-of-the-art solver software CPLEX does not seem to use problem symmetry in the MILP transshipment model; turning on-and-off the symmetry feature does not to alter the search tree of CPLEX 12.6. We posit that detecting symmetry is particularly difficult because 
the heat that is provided or required by the streams is a continuous variable which can be split into several temperature intervals. So to isolate the equivalent solutions, we ``fix'' a temperature interval and 
subsequently solve the MILP transshipment model. 

\noindent
\begin{minipage}{\linewidth}
\centering
\captionof{table}[]{HENS transshipment model symbols \citep{papoulias:1983}. Regular expressions denote alternatives, e.g.\ expression  $F(C_p)_{[i,j]}$ represents  $F(C_p)_{i}$ and  $F(C_p)_{j}$, the capacity of hot stream $i$ and cold stream $j$, respectively.}\label{table:hen}
\begin{tabular}{l l l}\toprule
 Name & Units & Description\\
\midrule
  {\bf Sets} & & \\
        \multicolumn{2}{l}{$HP, \, CP$} & Hot/Cold process streams\\
        \multicolumn{2}{l}{$HU, \, CU$} & Hot/Cold utilities\\
        \multicolumn{2}{l}{$HS, \, CS$} & Hot/Cold streams \& utilities\\
        \multicolumn{2}{l}{$TI = \{1,\dots, T\}$} & Temperature intervals\\
\midrule
{\bf Indices} \\
\multicolumn{2}{l}{$i \in HS=\{HP  \cup HU\}$} & Hot process stream/utility\\
\multicolumn{2}{l}{$j \in CS=\{CP  \cup CU\}$} & Cold process stream/utility\\
\multicolumn{2}{l}{$t \in TI$} & Temperature interval\\
\midrule
{\bf Parameters} & \\
 $F(C_p)_{[i,j]}$ & $[kW / K]$ & Flow rate capacities\\
$T^{[in,out]}_{[i, j,CU,HU]}$ & $[ K ]$ & Inlet/Outlet temperatures\\
$\delta T_{[i,j] t}$ & $[K]$  & Temperature changes at $t$\\
$Q{S_{it}}$ & $[kW]$ & Heat load of HU entering $t$\\
$Q{W_{jt}}$ & $[kW]$ & Heat load of CU exiting $t$\\
$Q_{[i,j]t}$ & $[kW]$ & Heat loads at $t$\\
$R_{t}$ & $[kW / K]$ & Total heat residual exiting $t$\\
$\Delta R_t$ & $[kW]$ & Heat residual difference at $t$\\
%
\midrule
{\bf Variables} & \\
$y_{ijt}$ & $[0, \, 1]$ & Existence of match $(ij)$ at $t$\\
$q_{ijt}$ & $[kW]$ & Heat load of $(ij)$ at $t$\\
$R_{it}$ & $[kW / K]$ & Heat residual of HS exiting $t$\\
$U_{ij}$ & $[kW]$ & Upper bound of match $(ij)$\\
\bottomrule
\end{tabular}
\end{minipage}
\vspace{-0.1cm}
\subsection*{MILP transshipment models}
\vspace{-0.1cm}
We assume the temperature change $\delta T_t$ is constant and equivalent for all streams in a fixed temperature interval.
The heat loads $Q_{it}$ provided and $Q_{jt}$ required by the relevant subset of hot streams $HP$ and cold streams $CP$, respectively, are: 
\begin{equation}\label{eq:henload} 
Q_{[i,j]t} = \delta T_t F(C_p)_{[i,j]}
\end{equation}
In the MILP transshipment model, the hottest hot utility is in the top temperature interval and a cold utility in the bottom interval; utilities are treated as streams in intermediate intervals. Excess heat is transferred to the next interval via a heat residual. 
Figure \ref{figure:hen} represents a transshipment model interval \citep{papoulias:1983, floudas}. The overall energy balance in Figure \ref{figure:hen} is given by:
\begin{equation}\label{eq:hen}
R_{t} - R_{t-1} + \sum_{j \in CU} Q{W_{jt}} - \sum_{i \in HU} Q{S_{it}}  =  \sum_{i \in HP} Q_{it} - \sum_{j \in CP} Q_{jt}
\end{equation}
The LP transshipment model initially provides the utility duties of the system, $Q{S_{i1}}, Q{W_{jT}}, R_t, R_{t-1}$.
\noindent
\begin{figure}
\centering
\includegraphics[width=.95 \linewidth]{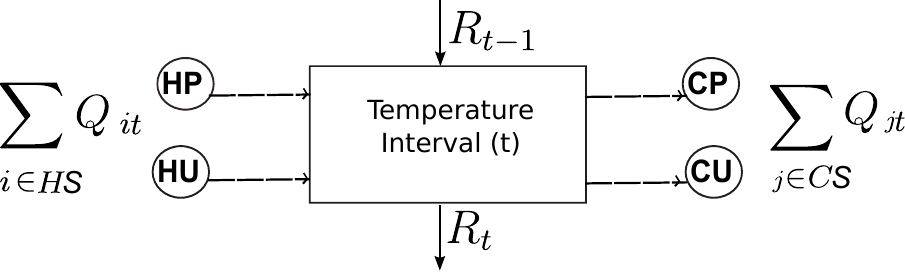}
\captionof{figure}[]{Heat balance around a temperature interval.}\label{figure:hen}
\end{figure}
The MILP transshipment model minimises the number of stream matches subject to thermodynamic constraints and prohibiting matches between hot utilities $HU$ and cold utilities $CU$. The objective function is a sum of binary variables where each term has coefficient 1. One way symmetry arises is due to the equal weights on each exchanger. \cite{chen:2015} suggest prioritising matches in potential heat exchange areas by reweighing the objective coefficients in the formulation of \cite{papoulias:1983}, i.e. the formulation used in this manuscript. But \cite{chen:2015} do not find significant computational advantages in reweighing the coefficients, so we chose to stick with the simpler formulation of \cite{papoulias:1983}.
The following formulation, illustrated in Figure \ref{figure:hen}, represents one fixed temperature interval $t=t'$:
\begin{align*}
\begin{array}{lll}
{\min} & \multicolumn{2}{l}{\displaystyle\sum_{i \in HP} \displaystyle\sum_{j \in CP} y_{ijt'}} \\
{\parbox{\widthof{$\min$}}{\textrm{s.t.}}} 
& Q_{it'}    = R_{it'} - R_{it'-1} + \displaystyle\sum_{j \in CS} q_{ijt'}, & i \in HP \\ 
& Q_{it'}    = R_{it'} - R_{it'-1} + \displaystyle\sum_{j \in CP} q_{ijt'}, & i \in HU \\ 
& Q_{jt'}  = \displaystyle\sum_{i \in HS} q_{ijt'}, & j \in CP \\ 
& Q_{jt'}  = \displaystyle\sum_{i \in HP} q_{ijt'}, & j \in CU \\ 
& R_{t'}   = \displaystyle\sum_{i \in HS} R_{it'},\\
& q_{ijt'} = 0, & i \in HU, j\in CU\\ 
& q_{ijt'} \leq \text{min}\{Q_{it'} + R_{it'-1}, Q_{jt'}\} y_{ijt'}, & i\in HS, \, j\in CS\\
& R_{it'} \geq 0, \, q_{ijt'} \geq 0 & i \in HS, \, j \in CS \\
& y_{ijt'}   \in \{0,\,1\} & i\in HS, \, j \in CS
\end{array}
\end{align*}
%
%
%
%
%
%
The MILP formulation exhibits combinatorial explosion in the possible configurations of hot and cold stream pairs. For example, in Figure \ref{figure:Hmatch}, $C_1$ can receive heat from $3$ hot streams. Similarly, $C_2$ and $C_3$, no matter how many $C_1$ matches, also require heat from one of the $3$ choices. Hence there are $3^3 = 27$ such configurations. Since $Q_{jt'}\neq 0$ $\implies$ $\displaystyle\sum_{i \in HS} q_{ijt'} > 0$, $j \in CP$, i.e.\ each cold stream needs to match with at least one hot stream in order to satisfy the load requirements. In the worst case scenario of two sets of $n$ hot streams and $m$ cold streams, their match is restricted either as one to one or one to many. Hence the following Lemma holds:
\begin{lemma}\label{lem:comb}
There are $n^m$ such configurations.
\end{lemma}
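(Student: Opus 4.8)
The plan is to exhibit a bijection between the configurations counted in Lemma~\ref{lem:comb} and the set of all functions from the cold streams to the hot streams, and then count those functions by the multiplication principle. First I would make the ``worst case'' hypothesis precise. Because the objective minimises $\sum_{i \in HP}\sum_{j \in CP} y_{ijt'}$ and the feasibility implication $Q_{jt'} \neq 0 \implies \sum_{i \in HS} q_{ijt'} > 0$ forces every cold stream $j \in CP$ to be served by at least one hot stream, a non-wasteful configuration assigns to each cold stream exactly one hot stream; this is precisely the ``one to one or one to many'' restriction in the text (a single hot stream may feed several cold streams, but no cold stream draws heat from two distinct hot streams). Hence a configuration is the same datum as a map $f\colon CP \to HP$, where $f(j) = i$ records that cold stream $j$ is matched with hot stream $i$.

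Next I would count the maps $f \colon CP \to HP$. With $|HP| = n$ and $|CP| = m$, the rule of product gives $n$ independent choices for the partner of each of the $m$ cold streams, so there are $\underbrace{n \cdot n \cdots n}_{m} = n^m$ such maps. Equivalently one argues by induction on $m$: for $m = 1$ there are $n$ configurations, and introducing one further cold stream multiplies the count by $n$ because its matching partner is selected independently of all the others. Either route yields $n^m$, proving the Lemma, and it is consistent with the small instances already tabulated, e.g.\ $n = m = 3$ gives $3^3 = 27$ as in Figure~\ref{figure:Hmatch}.

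There is no deep step here; the one place that needs care is the reduction from ``configuration'' to ``function $CP \to HP$'' --- i.e.\ arguing that in the regime the Lemma describes each cold stream is matched to one and only one hot stream, so that configurations are in bijection with $HP^{CP}$ rather than with the much larger family of all subsets of $HP \times CP$ that cover $CP$. Once that modelling identification is granted, the enumeration is the standard one-line count of functions between finite sets.
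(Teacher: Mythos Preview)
Your proposal is correct and follows essentially the same approach as the paper: the paper's proof in Appendix~B writes the count as $\underbrace{\binom{n}{1}\cdots\binom{n}{1}}_{m}=n^m$, which is exactly your multiplication-principle argument that each of the $m$ cold streams independently selects one of the $n$ hot streams. Your version is more explicit about the modelling step (identifying configurations with functions $CP\to HP$), but the combinatorial core is identical.
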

\begin{proof}[
Proof] of Lemma \ref{lem:comb} is provided in Appendix B\ref{appendix:B}.
\end{proof}
\section*{Mathematical Analysis}

This section offers background material on group theory and symmetry that help study HENS. Using the MILP transshipment formulation in the previous section, this section investigates network topology and detects symmetry in HENS.

\subsection*{Background on Symmetry}

The following definitions and descriptions are from \cite{clark:1984}.
A group $(\G, \cdot)$ is a nonempty set $\G$ with a binary operation $\cdot$ on $\G$ satisfying the following properties:
\begin{itemize}
\item[$\cdot$]  If $w, \, z$ $\in \G$, then $w\cdot z$ is also in $\G$;
\item[$\cdot$]  $w\cdot (z\cdot d)=(w\cdot z)\cdot d$ for all $w, \, z, \, d \in \G$;
\item[$\cdot$]  $\exists$ $\e \in \G$ such that $w\cdot \e = \e\cdot w = w, \; \forall w \in \G$;
\item[$\cdot$]  If  $w \in \G, \, \exists$ $w^{-1} \in \G$ such that $w\cdot w^{-1} = w^{-1}\cdot w = \e$.
\end{itemize}
A permutation of a set $X$ is a bijective function ${\bf \pi: \X\longrightarrow \X}$. Let $\X = \{1, \, \ldots, \, n\}$ be a set of $n$ elements and $\Pi^n$ the set of all permutations of elements in $\X$. To define the composition of groups, \cite{clark:1984} uses the notion of external and internal direct product. Given a finite sequence of groups $K_1, \, \ldots, \, K_n$, their external direct product is ${\Pi}_{i=1}^n K_i = K_1 \times K_2 \times \dots \times K_n$, with elements the tubles $(k_1, \, k_2, \, \ldots, \, k_n)$ for each $k_i \in K_i, \, \forall i$ and the operation of their product $(k_1, \, \ldots, \, k_n)(k_{1'}, \, \ldots, \, k_{n'}) = (k_1k_{1'}, \, \ldots, \, k_nk_{n'})$. Given a finite sequence of subgroups $K_1, \, \ldots, \, K_n \leq \G, \, \G$ is their internal direct product if the following properties hold:
\begin{itemize}
\item[$\cdot$] $\G = K_1 \dots K_n$ with tuples $(k_1\dots k_n)$ for $k_i \in K_i$;
\item[$\cdot$] for each $K_i \cap K_j$ a trivial subgroup is generated for $i \neq j$;
\item[$\cdot$] each $K_i$ is a normal subgroup of $\G$ i.e.\ $\forall k \in K_i$, $\forall g \in \G$, $gkg^{-1} \subseteq K_i$ and is denoted as $K_i \lhd G$.
\end{itemize}
The set $S_n: \X \longrightarrow \X$, under the operation of composition between all the $n$ distinct elements of $\X$, is the symmetric group of order $n$. The symmetry group of a MILP optimisation problem is a set of permutations that map any feasible solution to another feasible solution with the same value.

\subsection*{Detect Symmetry in HEN}

 Applying the formulation and definitions on several cases and combinations of streams, lead to the statement that in a fixed interval in HEN there can appear local symmetries through the exchange of streams. These symmetries arise in the feasible set of solutions from the topology of the problem as several pairs based on the binary variable $y_{ijt}$ give the same number of matches. This yields to the observation that if solutions in a temperature interval have the same number of matches and change of residual $\Delta R_{t'} = R_{t'} - R_{t'-1}$  then they are related between them. The symmetry group of a HEN is defined as follows: For $i \in HP$, $j \in CP$ and $t = t'$ if $\displaystyle\sum_{i \in HP} \displaystyle\sum_{j \in CP} y_{ijt'} =\displaystyle\sum_{i \in HP} \displaystyle\sum_{j \in CP} {y'}_{ijt'}$ and $\,  \Delta R_{t'} = \Delta {R'}_{t'} $ then $\exists \sigma \in \Pi^{n} \, \text{and} \, \pi \in \Pi^{m}$ such that:
\begin{equation}\label{def:symHEN}
\mathcal{G}(\textsl{HEN}(t'),  \Delta R_{t'}) \cong \{\sigma: HP \longrightarrow HP, \pi: CP \longrightarrow CP \}. 
\end{equation}
\section*{Mathematical Results}
\vspace*{-0.1cm}
\cite{liberti:2012} and \cite{costa-etal:2013} were the first to use algebra groups to explain the structure of optimisation problems and represent the symmetry. The properties of these groups then generate symmetry breaking constraints which improve the time that is taken for the problems to be solved. In this section, descriptions and proofs specify under which cases streams and utilities are considered to be symmetric in a temperature interval. 
 \subsection*{Represent Symmetry in HEN}
 For each stream the heat flow $F(C_p)_{[i,j]}$ is considered and Eq. \eqref{eq:hen} is rewritten.
\begin{equation}\label{eq:pload}
\Delta R_{t'} = \delta T_{t'} \Big (\displaystyle\sum_{i \in HP} F(C_p)_i  - \displaystyle\sum_{j \in CP} F(C_p)_j\Big).
\end{equation}
If two hot or two cold streams have the same flow rate heat capacity then they are equivalent.
\begin{lemma}\label{lem:HS}
For hot streams $h_1, h_2 \in HP$ if $F(C_p)_{h_1} = F(C_p)_{h_2}$ then $\exists$ a permutation $\sigma \in \Pi^n$ such that $\sigma(h_1) = h_2$.
\end{lemma}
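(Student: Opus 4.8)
The literal statement — that \emph{some} permutation of $HP$ sends $h_1$ to $h_2$ — is simply the transposition $(h_1\,h_2)$ and needs no hypothesis at all; what the hypothesis $F(C_p)_{h_1}=F(C_p)_{h_2}$ buys is that this transposition can be taken \emph{inside} the symmetry group $\mathcal{G}(\textsl{HEN}(t'),\Delta R_{t'})$ of Eq.~\eqref{def:symHEN}. So the plan is: let $\sigma$ be the transposition of $h_1$ and $h_2$ (the identity on every other hot stream), extend it to a permutation of $HS$ by fixing $HU$ pointwise, lift it to an action on the decision variables of the fixed-interval model by relabelling the hot index ($y_{ijt'}\mapsto y_{\sigma(i)jt'}$, $q_{ijt'}\mapsto q_{\sigma(i)jt'}$, $R_{it'}\mapsto R_{\sigma(i)t'}$, and likewise the inherited residuals $R_{it'-1}$), and then verify that this action maps the feasible set onto itself, preserves the objective, and preserves $\Delta R_{t'}$. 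Since $\sigma(h_1)=h_2$ by construction, this proves the Lemma in its intended reading.

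First I would extract the only consequence of the hypothesis that gets used: by Eq.~\eqref{eq:henload}, $F(C_p)_{h_1}=F(C_p)_{h_2}$ forces $Q_{h_1t'}=\delta T_{t'}F(C_p)_{h_1}=\delta T_{t'}F(C_p)_{h_2}=Q_{h_2t'}$, so $h_1$ and $h_2$ carry identical heat-load data in the interval $t=t'$. Then I would run through the constraints of the fixed-interval MILP and check invariance under $\sigma$: the two hot-stream balances indexed by $h_1$ and $h_2$ are exchanged with one another and stay valid exactly because their right-hand-side data $Q_{h_1t'}$ and $Q_{h_2t'}$ coincide; every cold-stream balance and the aggregate equation $R_{t'}=\sum_{i\in HS}R_{it'}$ see the hot index only inside a summation and so are returned to themselves; the forbidden matches $q_{ijt'}=0$, $i\in HU$, $j\in CU$, are untouched because $h_1,h_2\in HP$; the nonnegativity bounds are permuted among themselves; and each capacity/big-$M$ bound $q_{ijt'}\le\min\{Q_{it'}+R_{it'-1},\,Q_{jt'}\}y_{ijt'}$ for index $h_1$ becomes the one for index $h_2$, again because the only datum that could change, $Q_{h_1t'}$ versus $Q_{h_2t'}$, does not. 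The objective $\sum_{i\in HP}\sum_{j\in CP}y_{ijt'}$ is manifestly invariant under relabelling $i$, and by Eq.~\eqref{eq:pload} the value $\Delta R_{t'}$ depends on the hot capacities only through $\sum_{i\in HP}F(C_p)_i$, which no permutation of $HP$ alters; hence $\sigma$ fixes the match count and $\Delta R_{t'}$ and lies in $\mathcal{G}(\textsl{HEN}(t'),\Delta R_{t'})$.

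The step I expect to require the most care is the bookkeeping of the inherited residuals $R_{it'-1}$ inside the hot-stream balances and the big-$M$ bounds: the argument above goes through cleanly provided these per-stream residuals are permuted along with the other hot-indexed quantities (natural, since the $R_{it}$ are model variables), whereas if one insists on treating $R_{h_1t'-1}$ and $R_{h_2t'-1}$ as frozen, possibly unequal data, then $\sigma$ need not preserve feasibility and one must either restrict to the top interval $t'=1$ (where these residuals vanish) or replace the coefficient by a common valid upper bound $U_{ij}$, which the model already permits. Apart from this, the proof is just the observation that once $F(C_p)_{h_1}=F(C_p)_{h_2}$ the indices $h_1$ and $h_2$ enter the formulation in identical fashion. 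The same reasoning with the roles of hot and cold interchanged yields the analogous statement for two cold streams, and composing such transpositions shows that any set of hot streams sharing one $F(C_p)$ value generates a symmetric group inside $\mathcal{G}(\textsl{HEN}(t'),\Delta R_{t'})$.
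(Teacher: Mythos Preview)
Your argument is correct and in fact considerably more complete than the paper's. The paper's own proof is just the single observation you also make via Eq.~\eqref{eq:pload}: writing $F(C_p)_{h_1}=F(C_p)_{h_2}=F(C_p)_h$ and noting that $\Delta R_{t'}=\delta T_{t'}\big(F(C_p)_{h_1}+F(C_p)_{h_2}-\sum_{j\in CP}F(C_p)_j\big)$ is unchanged when $h_1$ and $h_2$ are swapped. It does not walk through the hot/cold balances, the big-$M$ bounds, the objective, or the residual bookkeeping; nor does it comment on the triviality of the literal statement.

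What you add beyond the paper is (i) the honest remark that the bare existence of $\sigma\in\Pi^n$ with $\sigma(h_1)=h_2$ is automatic, so the content must be membership in $\mathcal{G}(\textsl{HEN}(t'),\Delta R_{t'})$, and (ii) a constraint-by-constraint verification that the transposition preserves feasibility and objective, together with a careful caveat about the inherited residuals $R_{it'-1}$. Your route buys rigour and clarifies exactly where the hypothesis $F(C_p)_{h_1}=F(C_p)_{h_2}$ is consumed (equal $Q_{\cdot t'}$ in the hot balances and the capacity bounds), at the cost of length; the paper's route is a two-line invariance check of $\Delta R_{t'}$ only, which suffices for how $\mathcal{G}(\textsl{HEN}(t'),\Delta R_{t'})$ is actually used downstream but leaves the full MILP-symmetry claim implicit.
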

\begin{proof}
Let $F(C_p)_{h} = F(C_p)_{h_1} = F(C_p)_{h_2}$, and since temperature interval is assumed to be constant then from Eq. \eqref{eq:pload}:
\begin{equation*}
\begin{aligned}
\Delta R_{t'} &= \delta T_{t'} \Big( F(C_p)_{h_1} + F(C_p)_{h_2}  - \displaystyle\sum_{j \in CP} F(C_p)_j \Big)\\ &= \delta T_{t'} \Big(F(C_p)_{h} + F(C_p)_{h} - \displaystyle\sum_{j \in CP} F(C_p)_j \Big).
\end{aligned}
\end{equation*}
\end{proof}

\begin{lemma}\label{lem:CS}
For cold streams $c_1, c_2 \in CP$ if $F(C_p)_{c_1} = F(C_p)_{c_2}$ then $\exists$ a permutation $\pi \in \Pi^m$ such that $\pi(c_1) = (c2)$.
\end{lemma}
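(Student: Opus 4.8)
The plan is to mirror the argument for Lemma~\ref{lem:HS} almost verbatim, exploiting the symmetry of Eq.~\eqref{eq:pload} under swapping two cold streams with equal heat capacity. First I would set $F(C_p)_{c} = F(C_p)_{c_1} = F(C_p)_{c_2}$ and substitute into Eq.~\eqref{eq:pload}, isolating the two equal terms in the cold-stream summation:
\begin{equation*}
\begin{aligned}
\Delta R_{t'} &= \delta T_{t'} \Big( \textstyle\sum_{i \in HP} F(C_p)_i - F(C_p)_{c_1} - F(C_p)_{c_2} - \textstyle\sum_{j \in CP \setminus \{c_1,c_2\}} F(C_p)_j \Big)\\
&= \delta T_{t'} \Big( \textstyle\sum_{i \in HP} F(C_p)_i - F(C_p)_{c} - F(C_p)_{c} - \textstyle\sum_{j \in CP \setminus \{c_1,c_2\}} F(C_p)_j \Big).
\end{aligned}
\end{equation*}
The point is that this expression is invariant under the transposition $\pi = (c_1\, c_2) \in \Pi^m$, so $\Delta R_{t'} = \Delta R'_{t'}$ for the relabelled problem.

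Next I would argue that this invariance of $\Delta R_{t'}$, together with the fact that the objective $\sum_{i\in HP}\sum_{j\in CP} y_{ijt'}$ is a plain sum of binary match variables with unit coefficients (hence unchanged when we permute the cold-stream index $j$), places us exactly in the situation of the symmetry-group definition in Eq.~\eqref{def:symHEN}: the map that sends any feasible $(y,q,R)$ to its image under $\pi$ acting on the cold index is a bijection of the feasible set preserving the objective value. Therefore $\pi$ belongs to $\mathcal{G}(\textsl{HEN}(t'),\Delta R_{t'})$, and in particular the claimed permutation with $\pi(c_1) = c_2$ exists in $\Pi^m$.

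I would also note the one extra check that the cold-stream case needs but the hot-stream case did not visibly require: the constraint forbidding hot-utility/cold-utility matches, $q_{ijt'} = 0$ for $i \in HU, j \in CU$, and the fact that cold utilities appear with a different constraint ($Q_{jt'} = \sum_{i\in HP} q_{ijt'}$ rather than $\sum_{i\in HS}$). Since the lemma restricts attention to $c_1, c_2 \in CP$ (process streams), permuting two cold \emph{process} streams leaves all utility-related constraints untouched, so feasibility is genuinely preserved; I would remark on this so the reader sees why we may not extend the statement to $CU$ without further hypotheses.

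The main obstacle, such as it is, is not the algebra but making the feasibility-preservation step rigorous: one must confirm that relabelling $c_1 \leftrightarrow c_2$ maps the entire constraint system to an identical system (the heat-load parameters $Q_{jt'}$ transform consistently because $Q_{jt'} = \delta T_{t'} F(C_p)_j$ by Eq.~\eqref{eq:henload} and the capacities are equal, and the upper-bound data $U_{ij}$ transform accordingly). Once that bookkeeping is in place, the existence of $\pi$ is immediate, and the proof is a two-line display exactly parallel to Lemma~\ref{lem:HS}.
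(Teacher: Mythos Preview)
Your proposal is correct and follows exactly the paper's approach: the paper's own proof of Lemma~\ref{lem:CS} is simply ``Same as Lemma~\ref{lem:HS} for cold stream,'' and your argument is precisely that mirror, substituting into Eq.~\eqref{eq:pload} with $F(C_p)_{c_1}=F(C_p)_{c_2}$. Your additional remarks on feasibility preservation and the $CP$-versus-$CU$ distinction go beyond what the paper spells out but are consistent with it.
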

\begin{proof}
Same as Lemma \ref{lem:HS} for cold stream.
\end{proof}
Since the change of temperature $\delta T_{t'}$ is the same and constant it can be trivially claimed that the above results hold for the cases where hot streams and utilities or cold streams and utilities have the same heat load $Q_{[i,j]t'}$ and can be exchanged between them in the possible matches. For the cases where there exist more than one type of streams with the same heat capacities the idea of \citet{ostrowski-etal:2015} for the unit commitment problem is contemplated in this part. He distinguishes the units with the same characteristic into classes and proves that the structure of the problem can enforce the branching strategies that he proposes when the problem is solved with the branch and bound algorithm.  

Let $\textbf{W}$ be the set of classes of equivalent hot streams and $n_{w}$ the number of streams in each class. 
\begin{prop} \label{prop:HS}
For $t = t'$, given a set ${HP}^w = \{h_1, \dots, h_{n_w}\}\subset HP$ with $F(C_p)_i = F(C_p)_{i^{'}} $ the symmetry group of ${HP}^w$: 
\begin{equation*}
 \mathcal{G}({HP}^w) \cong \{\sigma \in \Pi^n | \sigma:  {HP}^w \longrightarrow {HP}^w\} \cong S_{n_w}.
 \end{equation*}
\end{prop}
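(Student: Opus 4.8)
The plan is to prove the displayed isomorphism chain by establishing the two inclusions $\mathcal{G}({HP}^w) \le S_{n_w}$ and $S_{n_w} \le \mathcal{G}({HP}^w)$, and then reading off the identification with $\{\sigma \in \Pi^n \mid \sigma : {HP}^w \longrightarrow {HP}^w\}$. First I would observe that, by the definition of the HEN symmetry group in Eq.~\eqref{def:symHEN} specialised to the class ${HP}^w$, every element of $\mathcal{G}({HP}^w)$ is realised by a bijection $\sigma : {HP}^w \longrightarrow {HP}^w$ (the identity being imposed off the class); since the set of all such bijections under composition is, by definition of the symmetric group, a copy of $S_{n_w}$, this already gives $\mathcal{G}({HP}^w) \le S_{n_w}$. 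The real content is therefore the reverse inclusion: that \emph{every} relabelling of the class is an admissible symmetry.

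For the reverse inclusion I would reduce to transpositions: it suffices to check that each adjacent transposition $\tau = (h_a\; h_b)$ with $h_a, h_b \in {HP}^w$, extended by the identity on $HS \setminus {HP}^w$ and on $CS$, lies in $\mathcal{G}({HP}^w)$, because the adjacent transpositions generate $S_{n_w}$ and $\mathcal{G}({HP}^w)$ is closed under composition. Given a feasible point $(y,q,R)$ of the one-interval MILP, let $(y',q',R')$ be obtained by swapping the blocks indexed by $h_a$ and $h_b$, i.e.\ $y'_{h_a j t'} = y_{h_b j t'}$, $q'_{h_a j t'} = q_{h_b j t'}$, $R'_{h_a t'} = R_{h_b t'}$ and symmetrically, all other components unchanged. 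I would then verify feasibility constraint by constraint: the hot-stream balance for row $h_a$ becomes the old balance for row $h_b$ because $Q_{h_a t'} = \delta T_{t'} F(C_p)_{h_a} = \delta T_{t'} F(C_p)_{h_b} = Q_{h_b t'}$ by the class hypothesis $F(C_p)_i = F(C_p)_{i'}$ together with Eq.~\eqref{eq:henload}; each cold-stream balance $Q_{jt'} = \sum_i q_{ijt'}$ is unchanged since it only reorders a finite sum; the aggregate residual $R_{t'} = \sum_i R_{it'}$ is likewise unchanged; and the capacity coupling $q_{ijt'} \le \min\{Q_{it'} + R_{it'-1},\, Q_{jt'}\}\, y_{ijt'}$ is preserved because its right-hand side for row $h_a$ transforms into that for row $h_b$, again using $Q_{h_a t'} = Q_{h_b t'}$ and the fact that the incoming residuals $R_{\cdot\, t'-1}$ are permuted consistently. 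The forbidden-match, nonnegativity and integrality constraints are manifestly invariant, the objective $\sum_{i \in HP}\sum_{j \in CP} y_{ijt'}$ is invariant because permuting the index $i$ over $HP$ does not change a sum, and $\Delta R_{t'}$ is invariant by exactly the computation in the proof of Lemma~\ref{lem:HS}. Hence $\tau \in \mathcal{G}({HP}^w)$.

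Combining the two steps, $\mathcal{G}({HP}^w)$ contains a generating set of $S_{n_w}$ and is itself contained in $S_{n_w}$, so $\mathcal{G}({HP}^w) = S_{n_w}$; identifying each such permutation with the corresponding $\sigma \in \Pi^n$ supported on ${HP}^w$ yields the displayed chain of isomorphisms. I would also record explicitly that the embedding $S_{n_w} \hookrightarrow \mathcal{G}({HP}^w)$ is faithful, since distinct permutations act distinctly already on the $y$-block, so the conclusion is a genuine isomorphism and not merely a surjection.

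The step I expect to be the actual obstacle — as opposed to bookkeeping — is the feasibility verification for the capacity coupling and the residual balances: one must move the \emph{internal} residual variables $R_{it'}$ (not only the aggregate $R_{t'}$ and the parameter-level $\Delta R_{t'}$ that appear in Eq.~\eqref{def:symHEN}) along with the streams, so that no individual $\min\{\cdot,\cdot\}$ bound is violated after relabelling. This is precisely why it matters that ${HP}^w \subset HP$ consists only of process streams governed by a uniform constraint template and that their heat loads $Q_{it'}$ coincide; if the class mixed process streams with utilities the balance equations would have different forms and the swap would not be structure-preserving.
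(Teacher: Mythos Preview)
Your argument is correct and substantially more thorough than the paper's own proof. The paper's proof (Appendix~B) is a pure counting argument: it enumerates the bijections $\sigma : {HP}^w \to {HP}^w$ by assigning $\sigma(h_1), \sigma(h_2), \ldots$ in turn, obtaining $n_w(n_w-1)\cdots 1 = n_w!$ choices, and declares the group to be $S_{n_w}$. It never explicitly checks that each such bijection preserves feasibility and objective value of the one-interval MILP; that step is left implicit, resting on Lemma~\ref{lem:HS}, which only verifies invariance of the aggregate $\Delta R_{t'}$ via Eq.~\eqref{eq:pload}. Your route instead establishes the two inclusions and, for the nontrivial direction, swaps the $y$-, $q$- and $R$-blocks for a transposition and walks through every constraint (hot balances, cold balances, aggregate residual, capacity coupling, integrality) together with the objective. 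This supplies precisely the content the paper leaves unstated, and your closing remark about moving the internal residuals $R_{it'}$ along with the streams identifies the one place where a naive relabelling could fail. In short: same destination, but your proof actually earns the word ``symmetry'' for each permutation, whereas the paper merely counts them.
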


\begin{proof}[
Proof] Proved in Appendix B \ref{appendix:B}.
\end{proof}

Let $\textbf{Z}$ be the set of classes of equivalent cold streams and $n_{z}$ the number of streams in each class.  
\begin{prop} \label{prop:CS}
For $t = t'$, given a set ${CP}^z = \{c_1, \dots, c_{m_z}\}\subset CP$ with $F(C_p)_j = F(C_p)_{j'} $  the symmetry group of ${CP}^w$:
\begin{equation*} \mathcal{G}({CP}^z) \cong \{\pi \in \Pi^m | \pi:  {CP}^z \longrightarrow {CP}^z\} \cong S_{m_z}.  \end{equation*}
\end{prop}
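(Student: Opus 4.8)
The plan is to mirror the argument for Proposition \ref{prop:HS}, exchanging the roles of hot and cold process streams, and to build $\mathcal{G}(CP^z)$ out of the transpositions supplied by Lemma \ref{lem:CS}. First I would fix the interval $t=t'$ and take two cold streams $c_a,c_b\in CP^z$. Because every member of $CP^z$ shares the common capacity $F(C_p)_j$, the required loads $Q_{jt'}=\delta T_{t'}F(C_p)_j$ of Eq.~\eqref{eq:henload} coincide across $CP^z$, and by Eq.~\eqref{eq:pload} the residual change $\Delta R_{t'}$ is invariant; hence, as in Lemma \ref{lem:CS}, the transposition $\pi_{ab}=(c_a\ c_b)$ acting on the cold index of $y_{ijt'}$ and $q_{ijt'}$ should send feasible points to feasible points with the same objective. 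The verification is a constraint-by-constraint check: the hot-stream balances and the total-residual equation only sum over $j$, so relabelling $c_a\leftrightarrow c_b$ merely permutes summands; the cold-stream balances for $c_a$ and $c_b$ are interchanged and their right-hand sides agree because $Q_{c_at'}=Q_{c_bt'}$; the capacity cuts $q_{ijt'}\le\min\{Q_{it'}+R_{it'-1},Q_{jt'}\}\,y_{ijt'}$ are preserved for the same reason, and the sign constraints trivially so; and since $CP^z\subset CP$ is disjoint from $CU$, the forbidden-match constraints $q_{ijt'}=0$ for $i\in HU,\ j\in CU$ are untouched. The objective $\sum_{i\in HP}\sum_{j\in CP}y_{ijt'}$ is manifestly invariant under any relabelling of $CP$.

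Next I would assemble these transpositions into a group. Let $H\subseteq\Pi^m$ be the set of permutations that fix $CP\setminus CP^z$ pointwise and restrict to bijections of $CP^z$, acting on solutions via the cold index. Using the group axioms recalled in the Background section, $H$ is a subgroup of $\Pi^m$: the composition of two such permutations again stabilises $CP^z$ and fixes its complement, the identity lies in $H$, inverses stay in $H$, and associativity is inherited. Step one shows every transposition $(c_a\ c_b)$ with $c_a,c_b\in CP^z$ lies in $H$ and is a genuine symmetry of $\textsl{HEN}(t')$ preserving $\Delta R_{t'}$. Since the transpositions of the $m_z$ elements of $CP^z$ generate the whole symmetric group on that set, every $\pi\in H$ is a composition of such transpositions and hence a symmetry; conversely, any symmetry in $\mathcal{G}(CP^z)$ is by the definition~\eqref{def:symHEN} of the HEN symmetry group restricted to $CP^z$ an element of $H$. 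Therefore $\mathcal{G}(CP^z)=H=\{\pi\in\Pi^m\mid \pi:CP^z\longrightarrow CP^z\}$, and the restriction map $\pi\mapsto\pi|_{CP^z}$ is a bijective homomorphism onto $\mathrm{Sym}(CP^z)\cong S_{m_z}$, which yields the claimed isomorphism.

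I expect the main obstacle to be the feasibility check in the first step, and specifically the bookkeeping that the set splits $CS=CP\cup CU$ and $HS=HP\cup HU$ are respected, so that a permutation of $CP^z$ never carries a process-stream index onto a utility index; this is exactly what keeps the utility balances and the forbidden-match constraints intact. Once that is in place, the group-theoretic content — that within-class transpositions generate $S_{m_z}$ and that the pointwise stabiliser of $CP\setminus CP^z$ inside the setwise stabiliser of $CP^z$ is an isomorphic copy of $S_{m_z}$ sitting inside $\Pi^m$ — is routine, and the proof can be stated essentially as ``analogous to Proposition \ref{prop:HS} with hot and cold streams interchanged.''
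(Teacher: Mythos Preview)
Your proposal is correct and, at its top level, matches the paper's one-line proof (``Follows Proposition~\ref{prop:HS} for cold streams''). The substantive difference is in what sits underneath. The paper's argument for Proposition~\ref{prop:HS} in Appendix~B is a bare counting: it enumerates the $n_w!$ bijections of $HP^w$ to itself and declares these to be the symmetries, without ever checking them against the MILP constraints. You instead verify feasibility directly---showing each transposition $(c_a\ c_b)$ preserves every constraint block and the objective by exploiting $Q_{c_at'}=Q_{c_bt'}$---and then invoke the standard fact that transpositions generate $S_{m_z}$ to lift to the full group. Your route is more work but actually earns the conclusion that these permutations belong to $\mathcal{G}(\textsl{HEN}(t'),\Delta R_{t'})$ in the sense of Eq.~\eqref{def:symHEN}; the paper's counting argument takes this for granted. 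Your careful handling of the $CP$/$CU$ split, ensuring a permutation of $CP^z$ never touches a utility index, is exactly the point the paper glosses over.
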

\begin{proof}
Follows Proposition \ref{prop:HS} for cold streams.
\end{proof}

\begin{prop} \label{prop:HS2}
Let $\mathcal{G}({HP}^1)$, $\mathcal{G}({HP}^2)$,$\dots$, $\mathcal{G}({HP}^w)$ be the sequence of finite groups $S_{n_1}, S_{n_2}, \dots, S_{n_w}$.
 \begin{equation*} \mathcal{G}(HP) \cong  \mathcal{G}({HP}^1) \times \mathcal{G}({HP}^2) \times \dots \times \mathcal{G}({HP}^w) \cong S_{n_1} \times \dots \times S_{n_w}. \end{equation*}
\end{prop}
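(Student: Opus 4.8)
The plan is to exhibit $\mathcal{G}(HP)$ as the \emph{internal} direct product of the subgroups $\mathcal{G}({HP}^1), \dots, \mathcal{G}({HP}^w)$ and then invoke the correspondence between internal and external direct products recalled in the Background section, together with Proposition \ref{prop:HS}. First I would record that the classes ${HP}^1, \dots, {HP}^w$ partition $HP$: each hot stream lies in exactly one heat-capacity class, so the classes are pairwise disjoint and their union is all of $HP$. The key step is to show that every $\sigma \in \mathcal{G}(HP)$ stabilises each class setwise, i.e. $\sigma({HP}^w) = {HP}^w$ for every $w$. A symmetry restricted to permuting hot streams must map the MILP constraint system to itself; since the heat load entering interval $t'$ through stream $i$ is $Q_{it'} = \delta T_{t'} F(C_p)_i$, the permutation $\sigma$ can send stream $i$ to stream $i'$ only when $F(C_p)_i = F(C_p)_{i'}$, so $\sigma$ never moves a stream out of its class. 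Conversely, Lemma \ref{lem:HS} and Proposition \ref{prop:HS} show that any permutation acting inside the classes (and fixing everything else) is a genuine symmetry. Identifying $\mathcal{G}({HP}^w)$ with the subgroup of $\mathcal{G}(HP)$ that permutes ${HP}^w$ and fixes every hot stream outside ${HP}^w$, we then have $\mathcal{G}({HP}^w) \cong S_{n_w}$ and that these subgroups generate $\mathcal{G}(HP)$.

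Next I would verify the three defining properties of an internal direct product from the Background section. (i) Generation: given $\sigma \in \mathcal{G}(HP)$, let $\sigma_w$ be the permutation agreeing with $\sigma$ on ${HP}^w$ and fixing all other hot streams; since the classes are disjoint and $\sigma$ preserves each one, $\sigma = \sigma_1 \sigma_2 \cdots \sigma_w$ with $\sigma_w \in \mathcal{G}({HP}^w)$. (ii) Trivial pairwise intersection: if $\tau \in \mathcal{G}({HP}^i) \cap \mathcal{G}({HP}^j)$ with $i \neq j$, then $\tau$ fixes everything outside ${HP}^i$ and everything outside ${HP}^j$; as ${HP}^i \cap {HP}^j = \emptyset$, every hot stream lies outside one of the two sets, so $\tau = \e$. (iii) Normality: for $\sigma \in \mathcal{G}(HP)$ and $\tau \in \mathcal{G}({HP}^i)$, the conjugate $\sigma \tau \sigma^{-1}$ again permutes ${HP}^i$ (because $\sigma$ stabilises ${HP}^i$) and fixes its complement, hence $\sigma \tau \sigma^{-1} \in \mathcal{G}({HP}^i)$, so $\mathcal{G}({HP}^i) \lhd \mathcal{G}(HP)$.

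Finally, properties (i)--(iii) say precisely that $\mathcal{G}(HP)$ is the internal direct product of $\mathcal{G}({HP}^1), \dots, \mathcal{G}({HP}^w)$, whence $\mathcal{G}(HP) \cong \mathcal{G}({HP}^1) \times \cdots \times \mathcal{G}({HP}^w)$; substituting $\mathcal{G}({HP}^w) \cong S_{n_w}$ from Proposition \ref{prop:HS} gives $\mathcal{G}(HP) \cong S_{n_1} \times \cdots \times S_{n_w}$, as claimed. I expect the main obstacle to be the class-stabilisation step: one must argue carefully from the MILP transshipment formulation that no symmetry can mix streams of inequivalent heat capacity, equivalently that $\mathcal{G}(HP)$ is exactly the Young subgroup of $\Pi^n$ determined by the partition into classes. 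Once that is established, the rest is routine group-theoretic bookkeeping against the internal direct product criteria.
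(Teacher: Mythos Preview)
Your argument is correct and complete. The paper's own proof is a one-line deferral, reading in full ``Follows a relevant Proof that is provided in \citep{liberti:2012, costa-etal:2013}'', with no self-contained reasoning. By contrast, you explicitly realise $\mathcal{G}(HP)$ as the internal direct product of the class subgroups by checking the three axioms from the Background section, and you isolate the one HENS-specific ingredient---that any $\sigma \in \mathcal{G}(HP)$ must stabilise each heat-capacity class because the constraint data $Q_{it'} = \delta T_{t'} F(C_p)_i$ would otherwise fail to be preserved. What your route buys is self-containment and a clear separation between the model-dependent step (class stabilisation) and the purely group-theoretic bookkeeping; what the paper's deferral buys is brevity at the cost of hiding where the transshipment structure actually enters. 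Your identification of $\mathcal{G}(HP)$ with the Young subgroup determined by the partition into capacity classes is exactly the right picture and is implicit in, but not spelled out by, the cited references.
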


\begin{proof}
Follows a relevant Proof that is provided in \citep{liberti:2012, costa-etal:2013}.
\end{proof}

\begin{prop} \label{prop:CS2}
Let $\mathcal{G}({CP}^1)$, $\mathcal{G}({CP}^2)$,$\dots$, $\mathcal{G}({CP}^z)$ be the sequence of finite groups $S_{m_1}, S_{m_2}, \dots, S_{m_z}$: \begin{equation*} \mathcal{G}(CP) \cong \mathcal{G}({CP}^1) \times \mathcal{G}({CP}^2) \times \dots \times \mathcal{G}({CP}^z) \cong  S_{m_1} \times \dots \times S_{m_z}. \end{equation*}
\end{prop}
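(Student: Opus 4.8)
The plan is to mirror the argument of Proposition~\ref{prop:HS2} verbatim, transposing "hot" to "cold" and $S_{n_w}$ to $S_{m_z}$. By Proposition~\ref{prop:CS}, each class ${CP}^z$ of equivalent cold streams carries symmetry group $\mathcal{G}({CP}^z)\cong S_{m_z}$, realised concretely as the set of permutations $\pi\in\Pi^m$ that fix every cold stream outside ${CP}^z$ and permute the streams inside it arbitrarily. The partition of $CP$ into the classes ${CP}^1,\dots,{CP}^z$ is by construction a disjoint union, $CP=\bigcup_{k=1}^{z}{CP}^k$, since "having the same flow-rate heat capacity $F(C_p)_j$" is an equivalence relation on $CP$. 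The claim is then that $\mathcal{G}(CP)$, the group of all admissible permutations of the cold process streams leaving $\Delta R_{t'}$ and the match count invariant, is the internal direct product of the subgroups $\mathcal{G}({CP}^1),\dots,\mathcal{G}({CP}^z)$, hence isomorphic to the external direct product $S_{m_1}\times\cdots\times S_{m_z}$.

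First I would verify the three internal-direct-product conditions recalled in the Background section. (i) Generation: any $\pi\in\mathcal{G}(CP)$ must send each class to a class of equal size with the same heat capacity, and since distinct classes have distinct capacities $\pi$ preserves each ${CP}^k$ setwise; restricting $\pi$ to ${CP}^k$ and extending by the identity elsewhere gives elements $\pi_k\in\mathcal{G}({CP}^k)$ with $\pi=\pi_1\cdots\pi_z$. (ii) Trivial intersection: a permutation lying in both $\mathcal{G}({CP}^k)$ and $\mathcal{G}({CP}^l)$ for $k\neq l$ fixes everything outside ${CP}^k$ and everything outside ${CP}^l$, hence fixes everything (as ${CP}^k\cap{CP}^l=\emptyset$), so it is the identity. (iii) Normality: since the factors act on disjoint supports they commute, so each $\mathcal{G}({CP}^k)$ is centralised by the others and is therefore normal in the product. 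Combining (i)--(iii) with the characterisation of internal direct products yields $\mathcal{G}(CP)\cong\mathcal{G}({CP}^1)\times\cdots\times\mathcal{G}({CP}^z)$, and substituting the isomorphisms from Proposition~\ref{prop:CS} gives the stated product of symmetric groups. Finally I would check, using Eq.~\eqref{eq:pload}, that any such product permutation genuinely lies in $\mathcal{G}(CP)$: permuting equal-capacity cold streams leaves $\sum_{j\in CP}F(C_p)_j$ and hence $\Delta R_{t'}$ unchanged, and relabelling cold streams leaves the objective $\sum_{i\in HP}\sum_{j\in CP}y_{ijt'}$ invariant, so the map is indeed a symmetry of the fixed-interval MILP.

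The main obstacle is not algebraic but definitional: the group $\mathcal{G}(CP)$ has only been described implicitly (through Eq.~\eqref{def:symHEN} and the surrounding discussion), so the proof must first pin down that an admissible cold-stream permutation necessarily respects the capacity partition. This is exactly the place where one invokes the "classes of equivalent streams" device attributed to \citet{ostrowski-etal:2015}: because the only data distinguishing cold streams inside a single fixed interval is $F(C_p)_j$, two cold streams are interchangeable if and only if they share that value, so the orbit structure of $\mathcal{G}(CP)$ is forced to be block-diagonal along the classes. Once that reduction is made explicit, the remainder is the routine internal-direct-product bookkeeping above, identical in form to Proposition~\ref{prop:HS2}.
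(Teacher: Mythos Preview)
Your proposal is correct and follows exactly the approach the paper takes: the paper's entire proof of Proposition~\ref{prop:CS2} is the single line ``Same as proof of Proposition~\ref{prop:HS2} for cold streams,'' and Proposition~\ref{prop:HS2} itself is dispatched by citing \citet{liberti:2012} and \citet{costa-etal:2013}. You have simply unpacked that citation into the explicit internal-direct-product verification, so your argument is strictly more detailed than what the paper records but identical in strategy.
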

\begin{proof}
Same as proof of Proposition \ref{prop:HS2} for cold streams.
\end{proof}

\begin{theorem}
For $t = t'$ with sets of classes of equivalent hot and cold streams ${HP}^w\subset HP$, ${CP}^z\subset CP$ the symmetry group that describes the relations of all streams in the interval is given by: $\mathcal{G}(\textsl{HEN}(t'), \Delta R_{t'}) \cong \mathcal{G}(HP) \times \mathcal{G}(CP)$.
\end{theorem}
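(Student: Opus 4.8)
The plan is to establish the isomorphism by proving two inclusions of groups and then invoking the internal direct product criterion recalled in the Background section. Throughout, fix $t=t'$ and regard a point of the MILP as the tuple $(y_{ijt'},\,q_{ijt'},\,R_{it'})$ over $i\in HS$, $j\in CS$; the data entering the constraints are the loads $Q_{it'}=\delta T_{t'}F(C_p)_i$ and $Q_{jt'}=\delta T_{t'}F(C_p)_j$ of Eq.~\eqref{eq:henload}, the bounds $\min\{Q_{it'}+R_{it'-1},Q_{jt'}\}$, and the forbidden set $\{(i,j):i\in HU,\,j\in CU\}$. Given $\sigma\in\mathcal{G}(HP)$ and $\pi\in\mathcal{G}(CP)$, extend each to fix the utilities (and to fix the index set it does not touch) and let it act on variables by relabelling: $y_{ijt'}\mapsto y_{\sigma^{-1}(i)\,\pi^{-1}(j)\,t'}$, and likewise on $q$ and $R$.

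First I would prove $\mathcal{G}(HP)\times\mathcal{G}(CP)\subseteq\mathcal{G}(\textsl{HEN}(t'),\Delta R_{t'})$. The objective $\sum_{i\in HP}\sum_{j\in CP}y_{ijt'}$ is invariant because it is just a double sum reindexed by $\sigma$ and $\pi$. By Propositions~\ref{prop:HS2} and~\ref{prop:CS2} such a $\sigma$ (resp.\ $\pi$) only permutes within a class ${HP}^w$ (resp.\ ${CP}^z$), on which $F(C_p)$, hence $Q_{it'}$ (resp.\ $Q_{jt'}$), is constant; therefore the hot- and cold-load equalities, the residual balance $R_{t'}=\sum_i R_{it'}$, the capacity bounds, and the forbidden HU--CU matches are all carried onto constraints of the same system, so feasible points map to feasible points. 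Invariance of $\Delta R_{t'}$ is exactly Eq.~\eqref{eq:pload}: it depends only on $\sum_{i\in HP}F(C_p)_i-\sum_{j\in CP}F(C_p)_j$, a symmetric function of the capacities, as already noted in Lemmas~\ref{lem:HS} and~\ref{lem:CS}. Hence the relabelling action embeds $\mathcal{G}(HP)\times\mathcal{G}(CP)$ into the symmetry group, and, since the first factor moves only hot indices and the second only cold indices, the two subgroups centralise one another and meet trivially.

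The reverse inclusion is the crux. Let $g\in\mathcal{G}(\textsl{HEN}(t'),\Delta R_{t'})$. Because the residual variables $R_{it'}$ and the hot-load equalities are indexed by hot streams only, while the cold-load equalities are indexed by cold streams only, $g$ cannot send a hot index to a cold index; moreover the block $\{q_{ijt'}:j\in CS\}$ appearing together in the $i$-th hot equality must go to a block $\{q_{i''jt'}:j\in CS\}$, which defines $\sigma(i)=i''$, and symmetrically the cold equalities define a permutation $\pi$ of $CS$. Consistency of these two relabellings forces $g\cdot y_{ijt'}=y_{\sigma(i)\pi(j)t'}$. Invariance of the objective and of the relations $q_{ijt'}=0$ for $i\in HU$, $j\in CU$ then forces $\sigma(HP)=HP$ and $\pi(CP)=CP$, and invariance of the data $Q_{it'}=\delta T_{t'}F(C_p)_i$ forces $\sigma$ to permute within the classes ${HP}^w$; by Proposition~\ref{prop:HS2}, $\sigma|_{HP}\in\mathcal{G}(HP)$, and likewise $\pi|_{CP}\in\mathcal{G}(CP)$ by Proposition~\ref{prop:CS2}. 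Thus $g$ is precisely the relabelling attached to $(\sigma|_{HP},\pi|_{CP})$, so $g$ lies in $\mathcal{G}(HP)\times\mathcal{G}(CP)$.

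Finally, viewed as subgroups of $\mathcal{G}(\textsl{HEN}(t'),\Delta R_{t'})$ as above, $\mathcal{G}(HP)$ and $\mathcal{G}(CP)$ generate the whole group (by the reverse inclusion), have trivial intersection, and commute, hence each is normal; the internal direct product criterion from the Background section then yields $\mathcal{G}(\textsl{HEN}(t'),\Delta R_{t'})\cong\mathcal{G}(HP)\times\mathcal{G}(CP)$. One caveat, consistent with the remark following Lemma~\ref{lem:CS}, is that hot (resp.\ cold) utilities sharing a heat load with a process stream must be folded into the relevant class, or else assumed to have pairwise distinct loads; this bookkeeping is routine. The genuine obstacle is the factorisation step in the reverse inclusion: showing that an arbitrary symmetry respects the bipartite hot/cold structure and splits as $\sigma\times\pi$, rather than acting in some entangled way on the $n\times m$ array of $y$-variables.
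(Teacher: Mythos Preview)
Your argument is sound, but it does considerably more work than the paper's own proof, which is a single sentence: the result ``follows from Proposition~\ref{prop:HS2}, \ref{prop:CS2} and the definition of the internal and external direct product of groups.'' The reason the paper can get away with this is its definition in Eq.~\eqref{def:symHEN}: the symmetry group $\mathcal{G}(\textsl{HEN}(t'),\Delta R_{t'})$ is \emph{defined} to consist of pairs $(\sigma,\pi)$ with $\sigma$ acting on $HP$ and $\pi$ acting on $CP$. In other words, the bipartite factorisation you identify as ``the genuine obstacle'' is built into the definition rather than proved. Once that is granted, the theorem is immediate: the admissible $\sigma$'s form $\mathcal{G}(HP)$ by Proposition~\ref{prop:HS2}, the admissible $\pi$'s form $\mathcal{G}(CP)$ by Proposition~\ref{prop:CS2}, the two factors act on disjoint index sets, and the direct-product axioms recalled in the Background section finish the job.

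Your reverse-inclusion step, arguing from the constraint structure that an arbitrary symmetry of the MILP must split as $\sigma\times\pi$, is therefore not needed for the paper's statement but is the substantive content one would want under the standard formulation-level definition of a symmetry group (as in \cite{margot:2010} or \cite{liberti:2012}). What you gain is a result that does not depend on restricting attention \emph{a priori} to relabellings of stream indices; what the paper gains is brevity, at the cost of a narrower notion of symmetry. Your caveat about utilities sharing loads with process streams is also a point the paper only gestures at informally after Lemma~\ref{lem:CS}.
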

\begin{proof}
Follows from Proposition {\ref{prop:HS2}, \ref{prop:CS2}} and the definition of the internal and external direct product of groups. 
\end{proof}
These proofs can also be trivially generalised for the cases where hot streams and utilities and cold streams and utilities have the same heat load from the Eq. \eqref{eq:henload} and Lemma \ref{lem:HS}, Lemma \ref{lem:CS} which consist the bottleneck of the above results. 
\vspace*{-0.1cm}
\section*{Computational Tests}
The proofs of symmetry in HEN and the above observations demonstrated in a test case\footnote[1]{http://minlp.org/library/problem/index.php?i=191$\&$lib=MINLP} of a transshipment model as formulated and implemented on GAMS 24.7.1 by \cite{chen:2015}. The model has been tested on a single 3.40 GHz Intel(R) Core(TM) i7-4770 CPU of a computer with 130 GB memory and running Linux. The MILP solver CPLEX 12.6 is used with optimality gap set to be $10^{-3}$ and absolute gap $0.99$. The flow capacities of the streams that are used lie in a range that are closed to each other and considered as balanced streams. Authors report that tests with balanced streams show exponential increase in the number of nodes and the termination time, which is expected as there are much more combinations of pairs that can take place. From a CPLEX's feature ''solnpool'' several optimal solutions with the same objective value are obtained. The given data of the streams and how they take part at each of the three subnetworks in which the problem is solved are analysed.
\subsection*{Test Case}
\vspace{-0.1cm}
The instance that is tested here is the $Transshipment\_V1\_5$. It consists of 5 hot streams and 5 cold streams and 2 hot utilities and 1 cold utility as initially provided and obtained from the LP transshipment model in the output analysis. The problems data are shown in Table \ref{table:test1} and ten solutions of the MILP model with objective value 24 are presented analytically in Figure \ref{figure:case1}.
\vspace{2pt}
\begin{minipage}{\linewidth}
\centering
\captionof{table}[]{Data for Test Case.}\label{table:test1}
\begin{tabular}{l l l}\toprule[1pt]
\small
Streams [i,j] & $F(C_p)_{[i,j]}$ [kW/K] & $Q_{[i,j]}$ [kW]\\
\hline
5 HP & [1;2;1.5;1.7;2.5] & [280;440;345;442;500]\\ 
5 CP & [1.3;1.5;1.9;2.5;2.8] & [195;360;570;625;504]\\
2 HU & - &[110;195]\\
1 CU & - &[60]\\ 
\bottomrule[1pt]
\end{tabular}
\end{minipage}\vspace{1pt}
\noindent
\begin{minipage}{\linewidth}
\centering
\includegraphics[width=1\linewidth]{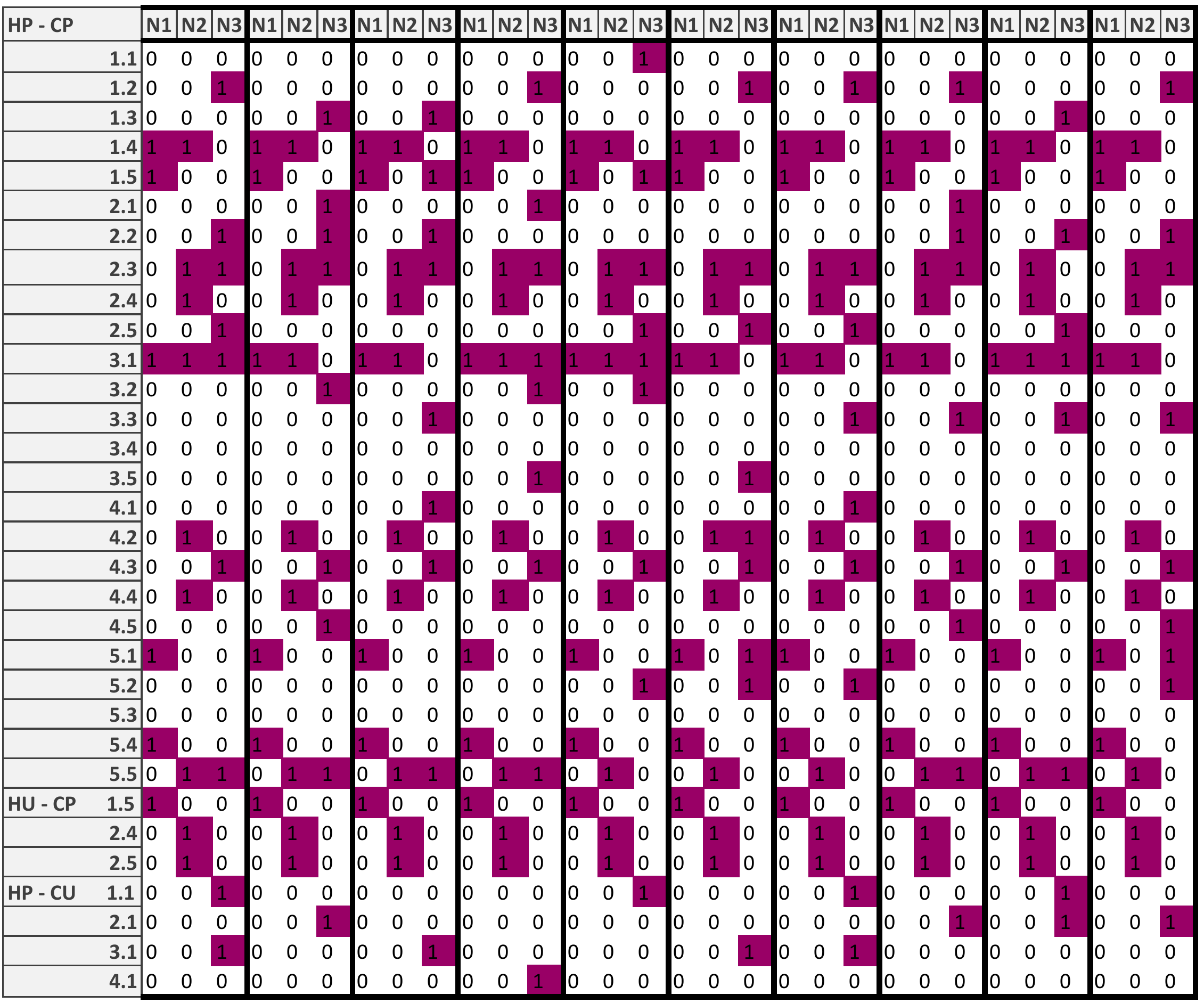}
\captionof{figure}[]{Matches of streams/utilities at each subnetwork.}\label{figure:case1}
\end{minipage}
\vspace{2pt}


The last subnetwork is isolated as the configuration of matches vary in each solution. It is observed that the load that is provided by $H1$ and $H3$ are the same and the load that is required by $C1$ and $CU$ are also the same. What is interesting is that even if initially they have different heat capacities in this subnetwork following the proofs from previous section these streams can be  exchanged. As illustrated in Figure \ref{figure:case12} these permutations lead to other optimal solutions some of which appear in the set of these solutions.
\subsection*{Observations}
This test case is relatively small, but note that it captures all the complexity investigated in this paper; we see significant symmetry even in this small example.
What is crucial in the results and observations is that if we are able to represent the relation of the parameters of a \textsl{HEN} problem from the given data then potentially the duplications of the identical solutions can be eliminated when the problem is solved. At the same time though is important that if at least one such optimal solution is produced all the others can be generated from the symmetry group that is assigned to the problem. Hence their effect in the overall investment cost can be evaluated which is sequentially solved in the last part of the formulation, that is not examined in this work.  
Moreover there is a prospective of using the results of this paper and reformulating the problem by introducing weight factors to prioritise or prohibit symmetric matches in the objective function.

\begin{minipage}{\linewidth}
\centering
\includegraphics[width=\linewidth]{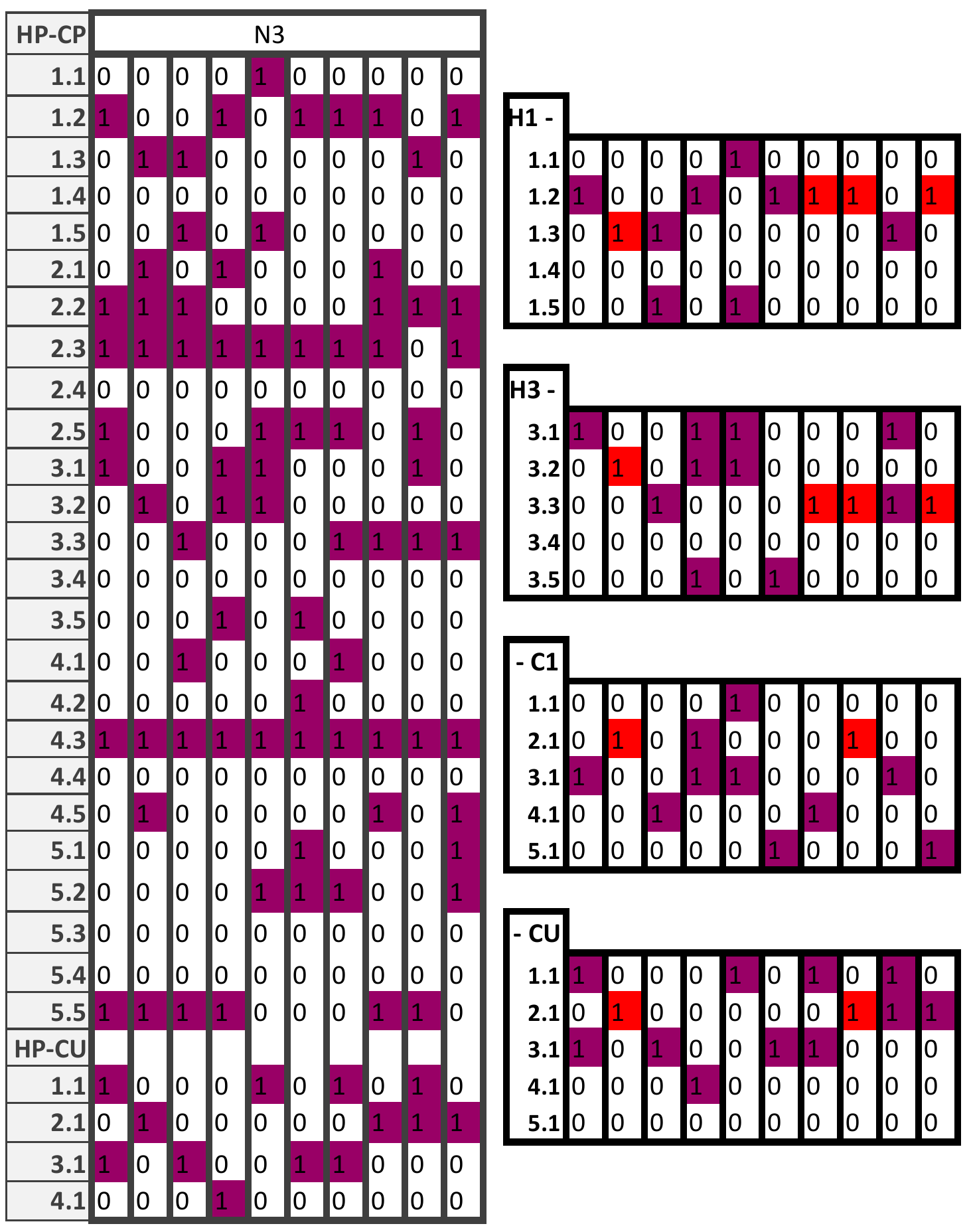}
\captionof{figure}[]{Matches of streams/utilities in subnetwork three.}\label{figure:case12}
\end{minipage}
\vspace{-0.6cm}
\section*{Conclusion}
This paper has explored, for the first time, the symmetric structure of the \textsl{HENS} problem. It has been shown where the symmetry is revealed in the problem and under which cases. We are interested in the performance of the algorithms that are used to solve these problems. More precisely the properties of the symmetry groups that we proved that they represent symmetry can be potentially used to exploit symmetry.
\section*{Acknowledgments}
\noindent
The support of the EPSRC DTP funding to G.K., and a Royal Academy of Engineering Research Fellowship to R.M., and EPSRC Grant EP/M028240/1, is acknowledged.
\section*{Appendices}
\subsection*{Appendix A}\label{appendix:A}
Original MILP transshipment model \citep{papoulias:1983}:
\begin{align*}
\begin{array}{lll}
\text{min} & \displaystyle\sum_{i \in HS} \displaystyle\sum_{t \in TI} \displaystyle\sum_{j \in CS} y_{ijt} \\ 
\text{s.t.} & R_{it} - R_{it-1} + \displaystyle\sum_{j \in CS} q_{ijt} = Q_{it},\, i\in HP, \, t \in TI \\
& R_{it} - R_{it-1}   + \displaystyle\sum_{j \in CS} q_{ijt} = Q_{it},\, i\in HU, \, t \in TI \\
& \displaystyle\sum_{i \in HS} q_{ijt}  = Q_{jt},\,  j\in CP, \, t \in TI \\ 
& \displaystyle\sum_{i \in HS } q_{ijt} = Q_{jt}, \, j\in CU,  t \in TI \\ 
& R_{t} -\displaystyle\sum_{i \in HS} R_{it} = 0, t \in TI \\ 
& Q_{ij} -  \displaystyle\sum_{t \in TI} q_{ijt} = 0,  i\in HS, \, j\in CS, \, t \in TI \\
& 0   \leq Q_{ij}  \leq U_{ij}y_{ijt},    i\in HS, \, j\in CS, \, t \in TI \\
& q_{ijt} \geq 0,   R_{it} \geq 0, i\in HS, \, j\in CS, \, t \in TI \\
& R_0 = R_t = 0, \, i\in HS, \, j\in CS, \, t \in TI \\
& y_{ij} \in \{0,\,1\}, \, i\in HS, \, j\in CS
\end{array}
\end{align*}
\vspace{-0.1cm}
\subsection*{Appendix B}\label{appendix:B}
\begin{proof}[Proof of Lemma \ref{lem:comb}]\label{proof:1}
Possible configurations of $n$ hot, $m$ cold streams\\ = $\underbrace{\binom{n}{1}\dots \binom{n}{1}}_{m} = \frac{n!}{(n-1)!}\dots \frac{n!}{(n-1)!}= \frac{n(n-1)!}{(n-1)!}\dots n = n^m $
\end{proof}
\vspace{-0.1cm}
\begin{proof}[Proof of Proposition \ref{prop:HS}]\label{proof:1}
Claim: there are $n!$ such permutations between all the elements of ${HP}^{w}$.\\ This can be shown as follows \citep{clark:1984}:
\begin{itemize}
\item [(1)] Assign $\sigma (h_1)$ to one of the elements of ${HP}^{w}$: there are n such choices
\item [] Since $\sigma$ is bijective $\sigma (h_1) \neq \sigma (h_2)$:
\item[(2)] Assign $\sigma (h_2)$ to one of the remaining elements of ${HP}^{w} - \{\sigma (h_1 )\}$: there are $(n-1)$ such choices
\item [$\dots$]
 \item [(n)] Assign $\sigma(h_{n_w})$ to the only remaining element: there is only $1$ such choice
\end{itemize}
Hence there are $n(n-1)\dots 1 = n!$ such permutations under which ${HP}^{w}$ is invariant.
\end{proof}
\linespread{1.2}
\bibliographystyle{apalike}
\small
\bibliography{symm_groups}

\end{document}